\DeclareMathOperator*{\esssup}{ess\,sup}
\newtheorem{thm}{Theorem}[section]
\newtheorem{lem}[thm]{Lemma}
\newtheorem{rem}[thm]{Remark}
\title{Well-posedness results for general reaction-diffusion transport of oxygen in encapsulated cells}
\author{
    Yuma Nakamura \\
  Graduate School of Natural Science and Technology\\
  \texttt{nakamurayuma@stu.kanazawa-u.ac.jp}
   \And
   Kharisma~Surya~Putri \\
  Graduate School of Natural Science and Technology\\
  \texttt{kharismasp99@gmail.com}\\
  \And
Alef~Edou~Sterk\\
  Bernoulli Institute for Mathematics, Computer Science and Artificial Intelligence, University of Groningen,\\
  The Netherlands\\
  \texttt{a.e.sterk@rug.nl}\\
  \And
 Thomas~Geert~de~Jong\\
  Faculty of Mathematics and Physics\\
  \texttt{t.g.de.jong.math@gmail.com}\\
}
\begin{document}
\maketitle

\let\thefootnote\relax
\footnotetext{This work is supported by JST CREST Grant Number JPMJCR2014.} 

\begin{abstract}
In this paper, we provide well-posedness results for nonlinear parabolic PDEs given by reaction-diffusion equations describing the concentration of oxygen in encapsulated cells. The cells are described in terms of a core and a shell, which introduces a discontinuous diffusion coefficient as the material properties of the core and shell differ. In addition, the cells are subject to general nonlinear consumption of oxygen. As no monotonicity condition is imposed on the consumption monotone operator theory cannot be used. Moreover, the discontinuity in the diffusion coefficient bars us to apply classical results on strong solutions. However, by directly applying a Galerkin method we obtain uniqueness and existence of the strong form solution. These results will provide the basis to study the dynamics of cells in critical states.
\end{abstract}

\keywords{Parabolic PDE, reaction-diffusion, diffraction problem, core-shell geometry, Galerkin approximation}

\section{Introduction}

King et al.~\cite{KING19,king2020corrigendum} proposed models that describe reaction-diffusion of oxygen through a protective shell encapsulating a core of donor cells to determine conditions so that hypoxia of the donor cells can be avoided. This geometry introduces a discontinuous diffusion coefficient as the material properties of the core and shell differ. The results of King et al. are restricted to numerical computation of stationary solutions assuming spherical geometries. In \cite{JON2021topological} their results were made rigorous. In \cite{dejong2023reactiondiffusion} corresponding parabolic PDE is studied for general core-shell geometries. It is shown that the PDE is well-posed and that stationary solutions are stable. These last results crucially depend on the monotonicity of the oxygen consumption which are derived from Michaelis-Menten kinetics. However, during critical cell states such as partial death of donor cells, these monotonicity conditions will not be satisfied. Hence, in this paper we consider the PDE for general consumption, i.e.\  consumption is bounded, non-negative, and zero for negative concentrations.

In the classical theory on existence and uniqueness for non-linear reaction-diffusion equations it is typically assumed that the reaction-term has asymptotics similar to an odd degree polynomial \cite{marion1987attractors,robinson2001infinite}. This means that that the reaction-term is unbounded. Consequently, the regularity of the constructed solution will depend on the degree of the leading asymptotics of the reaction-term. However, in our setting the reaction term is bounded which leads to a degenerate setting with respect to classical theory.

We construct the solutions using the Galerkin method (cf.   \cite{marion1987attractors,robinson2001infinite}).  Although the nonlinearity in our setting does not exactly satisfy the classical results in \cite{marion1987attractors,robinson2001infinite} a Galerkin set-up still works. Additionally, we are dealing with a so-called diffraction problem \cite{ladyzhenskaya2013boundary} meaning that the diffusion coefficient is discontinuous. However, it turns out that the bounds on the term with the Laplacian guarantee that the bounds for the Galerkin approximation are not in danger.  Finally, as with diffraction problems, the loss of regularity resulting from the discontinuity will not be visible when considering the well-posedness of weak solutions but only when we consider the well-posedness of strong solutions.

This paper is organized as follows. In \S \ref{sec:problem}, we present the statement of the problem. In \S \ref{sec:prelim} preliminaries are provided. In \S \ref{sec:results}, we first establish the global existence and uniqueness of the weak solutions, \S \ref{sec:weak}, which is followed by the main results: global existence and uniqueness of the strong solutions in \S \ref{sec:strong}. Finally, in \S \ref{sec:conc}, we provide conclusions and some remarks for future work.

\section{Statement of Problem \label{sec:problem}}
    We start with a description of the core-shell geometry. For an integer $N \geq 2$, let $\Omega\subset \mathbb{R}^N$ with $\bar{\Omega}$ compact, $S\coloneqq\partial \Omega$ the boundary of $\Omega$, $\nu\colon S \rightarrow \mathbb{R}^N$ the outward unit vector, and $T > 0$ a constant.
    Let $\Gamma~(\subset \Omega)$ be an $(N-1)$-dimensional surface that divides $\Omega$ into two open domains $\Omega_1$ and~$\Omega_2$, i.e., $\Gamma = \overline{\Omega}_1\cap \overline{\Omega}_2$, $\Omega = \Omega_1 \cup \Omega_2 \cup \Gamma$, and we suppose $\partial \Omega_1 = \Gamma$ and  $\partial \Omega_2 = S \cup \Gamma$, cf. Figure~\ref{fig:domain}. We take $S,\Gamma$ of class $C^2$. 
    \par
    
The governing equations of our problem are given by:
    \begin{align}
        \frac{d u}{d t} - b \Delta u &=  f(u) && \mbox{in~$\Omega_i \times (0, T)$,\ \  $i=1, 2$}, \label{eq:govp}\\
        u &= 0 && \mbox{on~$S \times (0, T)$}, \label{eq:boundp}\\
        [u]_\Gamma &= 0 && \mbox{on~$\Gamma \times (0, T)$}, \label{eq:contup}\\
        \left[ b \nabla u \cdot \nu \right]_{\Gamma} &=0 && \mbox{on~$\Gamma \times (0, T)$}\label{eq:contfluxp}, \\
        u &= u_0 && \mbox{in~$\Omega$, at $t=0$}, \label{eq:ut0}
    \end{align}\label{gov_eq}
    where 
    $b: \overline{\Omega}_1 \cup \Omega_2 \rightarrow \mathbb{R} $ is given by  
        \begin{align*}
            b(x) \coloneqq \left\{
            \begin{aligned}
                & b_1 & \mbox{if\; $x \in \overline{\Omega}_1$}, \\
                & b_2 & \mbox{if\; $x  \in \Omega_2$},
            \end{aligned}
            \right.
        \end{align*}
    with constants~$b_1, b_2 >0$. Here, the discontinuous diffusion term, $b \Delta u$, is called the \textit{diffraction Laplacian}. $u_0\colon\Omega\to\mathbb{R}$ is a given initial value,
    and $[\cdot]_\Gamma$  denotes the difference of  limiting values on $\Gamma$, i.e., let $u_1$ denote the restriction of $u$ to $\Omega_1$ and $u_2$ the restriction to $\Omega_2$ then $[u]_\Gamma = u_2|_\Gamma- u_1|_\Gamma=0$.

Finally, we let $f: L^2(\Omega) \rightarrow L^2(\Omega)$ be Lipschitz and satisfy
\begin{gather}
\begin{aligned}
(u, f(u) )  & \leq  K ,  \; \forall u \in L^2(\Omega) \\
 \|f\|_{L^2(\Omega)} & \leq  K, 
\end{aligned}
\label{cond:gen_con}
\end{gather} 
with $K>0$ 
and $(\cdot, \cdot)$ denoting the inner product on $L^2(\Omega)$.

The equations \eqref{eq:govp}-\eqref{eq:ut0} are for the transformed concentration. The concentration can be retrieved by $v= c_0 - u$ with $v=c_0$ on $S \times (0,T)$, Appendix A in \cite{dejong2023reactiondiffusion}. Assume that the consumption $g(v)\coloneqq f(c_0-v)$ is bounded, non-negative and zero for negative concentrations. Then, \eqref{cond:gen_con} is satisfied. 

    \begin{figure}[ht]
        \centering
        \includegraphics[width=7cm]{./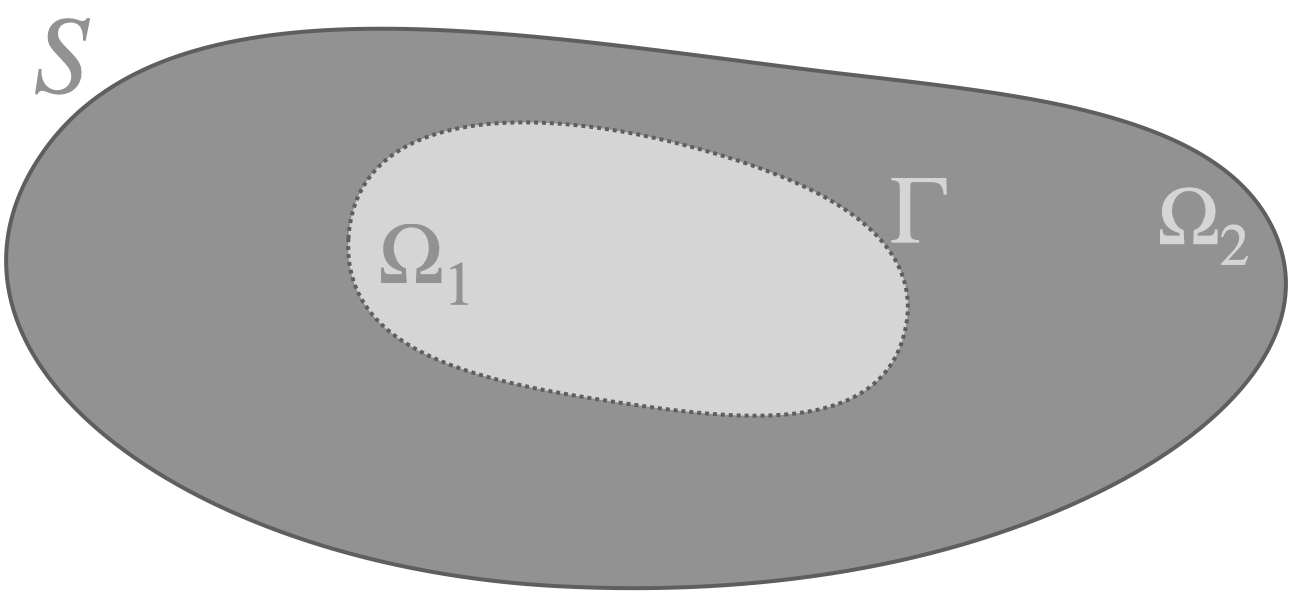}
        \caption{Core-shell geometry of an encapsulated cell.}
        \label{fig:domain}
    \end{figure}

\section{Preliminaries \label{sec:prelim}}
\subsection{Notations}
 We define $V\coloneqq H^1_0(\Omega)$, $H\coloneqq L^2(\Omega)$ and $V^{*}$, $H^{*}$ as their dual space, respectively. The inner product on $V$ is defined by $(u,v)_V = (u,v)_H + (\nabla u, \nabla v)_H$. We denote $(\cdot, \cdot)$ as the inner product on $H$ and  $\langle\cdot,\cdot\rangle$ as the pairing between $V^{*}$ and $V$.  Then, we have   $V \subset \subset H=H^*  \subset V^*$ where we write
 $V\subset\subset H$ to emphasize the compactness of the embedding of $V$ in $H$. Let $X=V,H,V^*$.
 
 The $L^p(0,T;X)$-norm $(p=2,\infty)$ is defined as follows:
 \begin{align*}
    \Vert u \Vert_{L^p(0,T;X)}  \coloneqq \left \{
    \begin{aligned}
        & \left( \int^{T}_0 \Vert u(t) \Vert^2_X dt \right)^{1/2} &&\mbox{if\; $p=2$},\\
        &\esssup_{t\in[0,T]}\|u\|_{X} &&\mbox{if\; $p=\infty$}.
    \end{aligned}
    \right.
\end{align*}

    We define $b_{\max}\coloneqq\max\{b_1,b_2\}$ and $b_{\min}\coloneqq\min\{b_1,b_2\}$. $u_n$ converging weakly to $u$ in $X$ will be denoted by $u_n \rightharpoonup u$ in $X$. 
We reserve $c>0$ to denote generic positive constants that do not depend on the relevant parameters and variables.

\subsection{Diffraction Laplacian}
   We introduce the bilinear form $a: V \times V \rightarrow \mathbb{R}$ given by
    \begin{align*}
        a(u,v) = \int_\Omega  b \nabla u \cdot \nabla v dx.
    \end{align*}
    This induces a linear operator $A: V \rightarrow V^*$ given by
    \begin{align*}
        \langle Au, v \rangle = a(u,v), \qquad  \forall v \in V.
    \end{align*}
    Note that $a(\cdot,\cdot)$ is bounded ($|a(u,v)| \leq c \Vert u \Vert_V \Vert v \Vert_V$) and coercive ($c \Vert u \Vert_V^2  \leq a(u,u)$). Consequently, by Lax--Milgram, $A$ is bijective. Also, observe that $A^{-1}$ is bounded since  for $Au = f \in V^*$, we can write $
\Vert u \Vert^2_V \leq c a(u,u) = c \langle f, u  \rangle  \leq   c \Vert f \Vert_{V^*}  \Vert u \Vert_V,$  which gives $\Vert u \Vert_V   \leq c \Vert f \Vert_{V^{*}}.$

Define $S: H \rightarrow H$ by $S =r \circ A^{-1} \circ\iota$, with $A^{-1}:V^{*}\rightarrow V$, $\iota$ the inclusion map $\iota\colon H\rightarrow V^{*},$ 
and $r$ the restriction map $r\colon V\rightarrow H$.

Since $A$ is bijective and $A^{-1}$ bounded $S$ is compact. Abusing notation we write $A = S^{-1}$ and now consider $A\colon H\rightarrow H$. 
Observe that $A$ is symmetric. From spectral theory for unbounded operators $A$ can be represented by $Au=\sum_{j=1}^{\infty}\lambda_{j}\left(u,w_j\right)w_j,$   where $\lambda_j$ and $w_j$ are the real eigenvalues and eigenfunctions of $A$ respectively. From the smoothness on the boundaries we obtain that the domain of $A$ is given by $D(A) = \{ u \in V \; : \; u|_{\Omega_i} \in H^2(\Omega_i), \; \; u \;\; {\rm satisfies} \;\; \eqref{eq:contfluxp} \}.$ The inner product on $D(A)$ is given by $(u,v)_{D(A)} = (Au,Av)$.

\subsection{Projections}

We define the projection $P_n$ which maps $u\in H$ into the first $n$ eigenfunctions of $A$, $P_n u  \coloneqq \sum_{j=1}^{n} (u,w_j)w_j$. The projection orthogonal to $P_n$ is defined by $Q_n\coloneqq {\rm id} - P_n$.

\subsection{Classical results}

We review the classical results from \cite{robinson2001infinite}.  Most of these results have been reduced to fit the application. A page number is included so that the full statement can be recovered. 

\begin{lem}[p. 199] If $X=H,V,V^*$ then 
\[
\Vert P_n u \Vert_X \leq \Vert u \Vert_X , \qquad P_n u \rightarrow u {\; \rm in \;} X.
\]
\label{lem:7.5}
\end{lem}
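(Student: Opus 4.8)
The plan is to read the claim off from the spectral decomposition of $A$ recalled in \S\ref{sec:prelim}. Normalize the eigenfunctions so that $(w_i,w_j)=\delta_{ij}$; then, by the spectral theorem for the compact symmetric operator $S=A^{-1}$, $\{w_j\}_{j\geq1}$ is an orthonormal basis of $H$, all eigenvalues satisfy $\lambda_j>0$ by coercivity of $a$, and $P_n$ is exactly the orthogonal projection of $H$ onto $W_n:=\operatorname{span}\{w_1,\dots,w_n\}$. To treat $V$ and $V^*$ on the same footing I would equip them with the inner products naturally attached to $a$, namely $(u,v)_a:=a(u,v)=\langle Au,v\rangle$ on $V$ and $(f,g)_{a^*}:=\langle f,A^{-1}g\rangle$ on $V^*$; coercivity and boundedness of $a$ together with boundedness of $A^{-1}$ make these Hilbert norms equivalent to $\|\cdot\|_V$ and $\|\cdot\|_{V^*}$. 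A one-line computation gives $(w_i,w_j)_a=\lambda_i\delta_{ij}$ and $(w_i,w_j)_{a^*}=\lambda_j^{-1}\delta_{ij}$, so $\{\lambda_j^{-1/2}w_j\}$ and $\{\lambda_j^{1/2}w_j\}$ are orthonormal systems in $V$ and $V^*$.

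Next I would verify completeness of $\{w_j\}$ in $V$ and in $V^*$, reducing both to completeness in $H$. If $u\in V$ is $(\cdot,\cdot)_a$-orthogonal to every $w_j$, then $\lambda_j(u,w_j)=0$, hence $(u,w_j)=0$ for all $j$ since $\lambda_j>0$, so $u=0$; thus $\bigcup_n W_n$ is dense in $V$. If $u\in V^*$ is $(\cdot,\cdot)_{a^*}$-orthogonal to every $w_j$, then $\langle u,w_j\rangle=0$ for all $j$, and since $\{w_j\}$ spans a dense subspace of $V$ and $u$ is continuous on $V$, $u=0$, so $\bigcup_n W_n$ is dense in $V^*$ as well. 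I would also note the compatibility of the expansion coefficients across $V\subset H\subset V^*$: $(u,w_j)_a=\lambda_j(u,w_j)=\lambda_j\langle u,w_j\rangle$, so $P_nu=\sum_{j=1}^n(u,w_j)w_j=\sum_{j=1}^n\langle u,w_j\rangle w_j$ is one and the same operator on all three spaces, and by the orthogonality relations above it is the orthogonal projection onto $W_n$ in each of $H$, $(V,(\cdot,\cdot)_a)$ and $(V^*,(\cdot,\cdot)_{a^*})$.

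With this in hand both assertions are the elementary properties of orthogonal projections onto an increasing chain of subspaces with dense union. For $X\in\{H,V,V^*\}$, expanding $u$ in the corresponding orthonormal basis, the Pythagorean identity gives $\|u\|_X^2=\|P_nu\|_X^2+\|Q_nu\|_X^2$, whence $\|P_nu\|_X\leq\|u\|_X$; and $\|u-P_nu\|_X^2=\|Q_nu\|_X^2$ is the tail of a convergent series, so $P_nu\to u$ in $X$. For the $V$-norm in the statement (and similarly $V^*$) the bound then holds once $(\cdot,\cdot)_a$ is fixed as the working inner product, since $\|\cdot\|_a$ and $\|\cdot\|_V$ are equivalent; this is the one place where the statement should be read up to an equivalent Hilbert norm, because as soon as $b_1\neq b_2$ the weight $b$ makes $\{w_j\}$ fail to be orthogonal for the particular inner product $(u,v)_V=(u,v)_H+(\nabla u,\nabla v)_H$. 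The only real obstacle is this bookkeeping — choosing the right inner products on $V$ and $V^*$, checking completeness there, and matching the three versions of $P_n$; past that, nothing beyond Bessel's inequality is used.
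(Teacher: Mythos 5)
Your argument is correct and is essentially the standard proof behind this result, which the paper does not prove but simply imports from Robinson (p.~199): normalize the eigenfunctions of $A$, observe that suitably rescaled they form orthogonal bases of $H$, $V$ and $V^*$ for the inner products adapted to $a$, and conclude by Bessel's inequality and density of $\bigcup_n W_n$. The caveat you flag is a genuine and worthwhile observation in the diffraction setting: since the $w_j$ are $a$-orthogonal but not orthogonal for the paper's stated inner product $(u,v)_V=(u,v)_H+(\nabla u,\nabla v)_H$ when $b_1\neq b_2$, the literal contraction bound $\Vert P_n u\Vert_V\leq\Vert u\Vert_V$ should be read in the equivalent norm induced by $a$ (or accepted with a constant $c$ depending on $b_{\min},b_{\max}$), which is all the Galerkin estimates in the paper actually require.
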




\begin{lem}[p. 218] Let $\mathcal{O}$ be a bounded open set in $\mathbb{R}^m$ and let $g_j$ be a sequence of functions in $L^2(\mathcal{O})$ with 
\[
\Vert g_j \Vert_{L^p(\mathcal{O})} \leq C .
\]
If $g \in L^2(\mathcal{O})$ and $g_j \rightarrow g$ pointwise a.e. then $g_j \rightharpoonup g$ in $L^2(\mathcal{O})$.
\label{lem:8.3}
\end{lem}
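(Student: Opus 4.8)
The plan is a standard ``subsequence plus identification of the limit'' argument, the only inputs being boundedness of $\mathcal{O}$, Egorov's theorem, and reflexivity of $L^2(\mathcal{O})$. First I would observe that, since $\mathcal{O}$ is bounded and $p\geq 2$, Hölder's inequality turns the hypothesis $\|g_j\|_{L^p(\mathcal{O})}\leq C$ into a uniform bound $\|g_j\|_{L^2(\mathcal{O})}\leq C'$. By reflexivity of $L^2(\mathcal{O})$ together with the Banach--Alaoglu theorem, every subsequence of $\{g_j\}$ then admits a further subsequence converging weakly in $L^2(\mathcal{O})$ to some limit.

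Second, I would show that any such weak limit must coincide with $g$. Suppose $g_{j_k}\rightharpoonup h$ in $L^2(\mathcal{O})$ and fix an arbitrary measurable set $F\subset\mathcal{O}$. Since $\mathcal{O}$ is bounded, $\chi_F\in L^2(\mathcal{O})$, so weak convergence gives $\int_F g_{j_k}\,dx\to\int_F h\,dx$. On the other hand, by Egorov's theorem, for every $\varepsilon>0$ there is a measurable $E\subset\mathcal{O}$ with $|\mathcal{O}\setminus E|<\varepsilon$ on which $g_j\to g$ uniformly; hence $\int_{F\cap E} g_{j_k}\,dx\to\int_{F\cap E} g\,dx$, while $\bigl|\int_{F\setminus E} g_{j_k}\,dx\bigr|\leq\|g_{j_k}\|_{L^p}\,|\mathcal{O}\setminus E|^{1-1/p}\leq C\varepsilon^{1-1/p}$, and $\int_{F\setminus E}|g|\,dx\to 0$ as $\varepsilon\to 0$ by absolute continuity of the Lebesgue integral of $g\in L^1(\mathcal{O})$. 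Letting $k\to\infty$ and then $\varepsilon\to 0$ yields $\int_F h\,dx=\int_F g\,dx$ for every measurable $F\subset\mathcal{O}$, and therefore $h=g$ a.e.

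Third, I would upgrade this to convergence of the full sequence by the usual contradiction argument: if $g_j\not\rightharpoonup g$ in $L^2(\mathcal{O})$, then there exist $\phi\in L^2(\mathcal{O})$, $\delta>0$, and a subsequence with $\bigl|\int_\mathcal{O}(g_{j_k}-g)\phi\,dx\bigr|\geq\delta$; but this subsequence is bounded in $L^2(\mathcal{O})$, hence has a further subsequence converging weakly in $L^2(\mathcal{O})$, necessarily to $g$ by the previous step, contradicting the choice of $\phi$ and $\delta$. Thus $g_j\rightharpoonup g$ in $L^2(\mathcal{O})$.

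The only mildly delicate point is the control of the contribution from the small ``bad'' set $\mathcal{O}\setminus E$ produced by Egorov's theorem, which is precisely where the $L^p$ bound (or, just as well, the $L^2$ bound on the bounded domain) enters via Hölder's inequality; everything else is routine functional analysis, so I do not anticipate a serious obstacle.
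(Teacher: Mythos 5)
The paper does not prove this lemma---it is quoted (in reduced form) from Robinson, p.~218, as a classical result---and your argument is correct and is essentially the standard proof of that result: a uniform bound gives weakly convergent subsequences, Egorov's theorem plus the equi-integrability supplied by the $L^p$ bound identifies every weak subsequential limit with $g$, and the subsequence principle upgrades this to the full sequence. Your explicit flagging of the hypothesis $p\geq 2$ is appropriate, since the stated conclusion (weak convergence in $L^2$ of a merely $L^p$-bounded sequence) only makes sense under that reading, which is the one relevant to the paper's application ($p=2$).
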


\begin{thm}[p. 191] Suppose that
\[
u \in L^2(0,T;V) \;\; {\rm and } \;\; \frac{du}{dt} \in L^2(0,T;V^*)
\]
then $u \in  C^0([0,T],  H)$ with the caveat that it may have to be adjusted up to a set of measure zero.
\label{theo:7.2}
\end{thm}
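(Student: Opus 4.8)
The plan is to exploit the spectral basis $\{w_j\}$ of $A$ already at our disposal: I will show that the Galerkin truncations $P_n u$ form a Cauchy sequence in $C^0([0,T];H)$, whose limit is the sought-after continuous representative. This keeps the whole argument inside the functional-analytic framework of \S\ref{sec:prelim} and sidesteps time-mollification.

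First I would check that, writing $c_j(t) \coloneqq (u(t),w_j)$, each coordinate $c_j$ lies in $H^1(0,T)$ with $c_j'(t) = \langle \tfrac{du}{dt}(t), w_j\rangle$. This follows by pairing the distributional identity $\int_0^T u(t)\varphi'(t)\,dt = -\int_0^T \tfrac{du}{dt}(t)\varphi(t)\,dt$ in $V^*$ (for $\varphi\in C_c^\infty(0,T)$) against the fixed vector $w_j\in V$, using that the $H$-inner product is the restriction of the $V^*$--$V$ pairing. Consequently $c_j$ has a continuous representative and $c_j^2$ is absolutely continuous with $(c_j^2)' = 2c_jc_j'\in L^1(0,T)$, so that
\[
c_j(t)^2 = c_j(s)^2 + 2\int_s^t c_j(\tau)c_j'(\tau)\,d\tau \qquad \text{for all } s,t\in[0,T].
\]

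Next, for $m>n$ I would sum this identity over $n<j\le m$. Since $\sum_{n<j\le m}c_j(\tau)c_j'(\tau) = \langle \tfrac{du}{dt}(\tau), (P_m-P_n)u(\tau)\rangle$ and $\sum_{n<j\le m}c_j(t)^2 = \|(P_m-P_n)u(t)\|_H^2$, integrating additionally in $s$ over $[0,T]$ and using $|\langle\cdot,\cdot\rangle|\le\|\cdot\|_{V^*}\|\cdot\|_V$ gives, uniformly in $t$,
\[
\|(P_m-P_n)u(t)\|_H^2 \le \tfrac1T\|(P_m-P_n)u\|_{L^2(0,T;H)}^2 + 2\|\tfrac{du}{dt}\|_{L^2(0,T;V^*)}\|(P_m-P_n)u\|_{L^2(0,T;V)}.
\]
By Lemma~\ref{lem:7.5} with $X=H$ and $X=V$, together with dominated convergence — the relevant integrands are bounded by a fixed integrable function independent of $n,m$ and vanish a.e.\ as $n\to\infty$ — the right-hand side tends to $0$ as $n,m\to\infty$. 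Hence $\{P_n u\}$ is Cauchy in $C^0([0,T];H)$; let $\tilde u\in C^0([0,T];H)$ denote its limit. Since $P_n u\to u$ in $L^2(0,T;H)$ as well (again Lemma~\ref{lem:7.5} and dominated convergence), $\tilde u = u$ a.e.\ on $[0,T]$, which is the asserted representative. Passing to the limit in the same identity written for $P_n u$ itself moreover yields that $t\mapsto\|\tilde u(t)\|_H^2$ is absolutely continuous with $\tfrac{d}{dt}\|\tilde u(t)\|_H^2 = 2\langle \tfrac{du}{dt}(t), \tilde u(t)\rangle$, the form in which the result is typically applied.

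The delicate point is the first step: because $\tfrac{du}{dt}(t)$ a priori lies only in $V^*$, the function $u$ need not be differentiable with values in $H$, so one cannot apply a product rule to $\|u(t)\|_H^2$ directly; the trick is that testing against a single \emph{fixed} basis vector $w_j$ collapses the question to a scalar $H^1(0,T)$ function, for which the product rule and the fundamental theorem of calculus are elementary. As an alternative one can run the classical mollification argument — extend $u$ past the endpoints, set $u^\varepsilon = \eta_\varepsilon * u$, which is smooth in $t$ with values in $V$ and for which the product rule is immediate, and pass to the limit — the only nuisance there being to arrange the extension so the distributional time derivative picks up no boundary mass.
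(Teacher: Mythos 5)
Your argument is correct and complete. Note, however, that the paper does not prove Theorem~\ref{theo:7.2} at all: it is quoted verbatim as a classical result from Robinson (p.~191), whose own proof runs along the lines you sketch only in your final paragraph --- extend $u$ in time, mollify, and show the mollified functions are Cauchy in $C^0([0,T];H)$ via the identity $\frac{d}{dt}\|u_\varepsilon-u_\eta\|_H^2=2\langle u_\varepsilon'-u_\eta',u_\varepsilon-u_\eta\rangle$. Your main route is genuinely different: you replace time-mollification by the spectral truncations $P_nu$, reduce everything to scalar $H^1(0,T)$ coordinates $c_j(t)=(u(t),w_j)$ (which neatly circumvents the usual difficulty that $\|u(t)\|_H^2$ cannot be differentiated directly when $du/dt$ only lives in $V^*$), and obtain uniform Cauchyness from Lemma~\ref{lem:7.5} plus dominated convergence. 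The steps all check out: the coordinate identity $c_j'=\langle du/dt,w_j\rangle$ is justified by pairing the distributional derivative against the fixed test vector $w_j\in V$; the summed identity, the averaging in $s$, and the bound $\|(P_m-P_n)u\|_{L^2(0,T;V)}\le\|Q_nu\|_{L^2(0,T;V)}\to0$ are all sound; and the limit identity recovers the energy equality $\frac{d}{dt}\|u\|_H^2=2\langle du/dt,u\rangle$ as a bonus. What your approach buys in this paper is coherence: the eigenbasis of $A$ and the projections $P_n$, $Q_n$ are already set up for the Galerkin scheme, so no new machinery is needed. What it costs is generality: it leans on the existence of a basis $\{w_j\}\subset V$, orthonormal in $H$, for which the projections are uniformly bounded on $H$, $V$, and $V^*$ (Lemma~\ref{lem:7.5}), whereas the mollification proof works for an arbitrary Gelfand triple $V\subset H\subset V^*$ with no spectral structure. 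Either version is acceptable as a standalone proof of the stated theorem.
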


\section{Well-Posedness Results \label{sec:results}}

We will start with the well-posedness of the weak solutions which we then straightforwardly extend to the well-posedness of the strong solution.
\subsection{Weak Solutions \label{sec:weak}}

We consider
\begin{align}
\frac{du}{dt} + Au =  f(u), \label{eq:govw}
\end{align}
as an equality in $L^2(0,T;V^*)$.
    \begin{thm}[Well-Posedness of Weak Solutions] 
        Equation \eqref{eq:govw} with $u(0)=u_0 \in H$ has a unique weak solution $u$ for any $T>0$, 
        \[u \in L^2(0,T;V), \; \; \frac{du}{dt} \in L^2(0,T;V^*),\]
        and $u  \in C^0([0,T];H) $ with the caveat that it may have to be adjusted on a set of measure zero. 
        Furthermore, $u_0 \mapsto u(t)$ is in $C^0(H;H)$. 
        \label{theo:mainweak}
    \end{thm}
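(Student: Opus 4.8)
The plan is to follow the standard Galerkin scheme adapted to this bounded-nonlinearity, diffraction setting, as advertised in the introduction. First I would set up the finite-dimensional approximations: seek $u_n(t) = \sum_{j=1}^n c_j^n(t) w_j \in P_n H$ solving the projected ODE system
\begin{align*}
\frac{d}{dt}(u_n, w_j) + (Au_n, w_j) = (f(u_n), w_j), \qquad (u_n(0), w_j) = (u_0, w_j),
\end{align*}
for $j = 1, \dots, n$. Since $f$ is Lipschitz on $H$ and $P_n$ is a bounded linear projection, the right-hand side is Lipschitz in the coefficients $c^n$, so the Picard--Lindel\"of theorem gives a unique local solution; the a priori bounds below will show it is global.

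Next come the energy estimates. Testing with $u_n$ (i.e.\ multiplying the $j$-th equation by $c_j^n$ and summing) gives
\begin{align*}
\frac12 \frac{d}{dt} \|u_n\|_H^2 + a(u_n, u_n) = (f(u_n), u_n).
\end{align*}
Here the two structural ingredients are used: coercivity, $a(u_n,u_n) \geq c\|u_n\|_V^2$, and the first bound in \eqref{cond:gen_con}. The one subtlety is that \eqref{cond:gen_con} is stated for $u \in L^2(\Omega)$, while the natural test object is $P_n u_n = u_n$; since $u_n \in H$ this is fine, giving $(f(u_n), u_n) \leq K$. Hence $\frac{d}{dt}\|u_n\|_H^2 + 2c\|u_n\|_V^2 \leq 2K$, and integrating in time yields a bound on $\|u_n\|_{L^\infty(0,T;H)}$ (depending on $\|u_0\|_H$, $K$, $T$) and on $\|u_n\|_{L^2(0,T;V)}$, uniformly in $n$. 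This also shows the local solution does not blow up, so it exists on $[0,T]$. To bound $du_n/dt$ in $L^2(0,T;V^*)$, write $du_n/dt = P_n f(u_n) - P_n A u_n$; the term $P_n A u_n$ is controlled in $V^*$ by $\|u_n\|_V$ via boundedness of $a$, and $P_n f(u_n)$ is controlled in $H \hookrightarrow V^*$ by the second bound in \eqref{cond:gen_con} (here the boundedness of $f$ in $L^2$, rather than its growth, is what keeps the diffraction discontinuity from causing trouble, exactly as flagged in the introduction). So $\|du_n/dt\|_{L^2(0,T;V^*)}$ is uniformly bounded.

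Then I would pass to the limit. By the uniform bounds and reflexivity, extract a subsequence with $u_n \rightharpoonup u$ in $L^2(0,T;V)$ and $du_n/dt \rightharpoonup du/dt$ in $L^2(0,T;V^*)$; by Aubin--Lions (the compact embedding $V \subset\subset H$ plus the $du_n/dt$ bound) we get $u_n \to u$ strongly in $L^2(0,T;H)$, hence $u_n(t) \to u(t)$ in $H$ for a.e.\ $t$ and, along a further subsequence, $u_n(x,t) \to u(x,t)$ pointwise a.e.\ on $\Omega \times (0,T)$. Lipschitz continuity of $f$ then gives $f(u_n) \to f(u)$ in $L^2(0,T;H)$ (or one can invoke Lemma~\ref{lem:8.3} using the $L^2$ bound on $f$ together with pointwise convergence). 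The linear terms pass to the limit by weak convergence after testing against a fixed $w_j$ and a smooth scalar time factor, and Lemma~\ref{lem:7.5} handles $P_n u_0 \to u_0$ and the density of the $w_j$; this identifies the limit as a weak solution of \eqref{eq:govw}, and Theorem~\ref{theo:7.2} gives $u \in C^0([0,T];H)$ after adjustment on a null set, so in particular $u(0) = u_0$ makes sense.

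Finally, uniqueness and continuous dependence are handled together by a Gronwall argument. Given two solutions $u, \tilde u$ with data $u_0, \tilde u_0$, set $w = u - \tilde u$; then $dw/dt + Aw = f(u) - f(\tilde u)$, and pairing with $w$ (legitimate since $w \in L^2(0,T;V)$ with $dw/dt \in L^2(0,T;V^*)$, so $t \mapsto \|w(t)\|_H^2$ is absolutely continuous) gives
\begin{align*}
\frac12 \frac{d}{dt}\|w\|_H^2 + c\|w\|_V^2 \leq \|f(u) - f(\tilde u)\|_H \|w\|_H \leq L\|w\|_H^2,
\end{align*}
with $L$ the Lipschitz constant of $f$. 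Dropping the coercive term and applying Gronwall yields $\|w(t)\|_H^2 \leq e^{2Lt}\|u_0 - \tilde u_0\|_H^2$, which is simultaneously uniqueness (take $u_0 = \tilde u_0$) and the asserted Lipschitz — hence $C^0(H;H)$ — dependence of $u(t)$ on $u_0$. I expect the main obstacle to be bookkeeping rather than conceptual: being careful that the test function used in the energy estimate legitimately lies in the space where \eqref{cond:gen_con} applies, and making the limit passage in the nonlinear term rigorous despite $f$ acting on $L^2(\Omega)$ pointwise-in-time (which is where the Lipschitz hypothesis, as opposed to mere boundedness, earns its keep).
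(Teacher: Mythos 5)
Your proposal is correct and follows essentially the same route as the paper: Galerkin approximation in the eigenfunctions of $A$, the energy estimate from $(u,f(u))\leq K$ plus coercivity, uniform bounds on $du_n/dt$ in $L^2(0,T;V^*)$, compactness to pass to the limit, identification of the nonlinear term, Theorem~\ref{theo:7.2} for continuity in $H$, and a Gronwall argument for uniqueness and continuous dependence. The only cosmetic differences are that you invoke Aubin--Lions for the strong $L^2(0,T;H)$ convergence and use the Lipschitz continuity of $f$ to pass to the limit in the nonlinearity directly, where the paper instead goes through pointwise a.e.\ convergence and Lemma~\ref{lem:8.3}; both are sound.
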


    \begin{proof} 
        We consider the solutions expressed by the first $n$ eigenfunctions of $A$:
        \begin{align*}
            u_n(t) = \sum_{j=1}^{n} u_{nj}(t) w_j,
        \end{align*}
satisfying 
\begin{align}
\left( \frac{du_n}{dt} , w_i  \right) + \left(A u_n , w_i \right)  = \left(f(u_n), w_i \right), \qquad 1 \leq i \leq n \label{eq:eigODE}
\end{align}
with $(u_n(0),w_i) = (u_0,w_i)$.  Define $H_n \coloneqq P_n H \subset H$. Hence, we need to solve the IVP
\begin{align}
\frac{dv}{dt} + Av = P_n f(v), \qquad v(0) = P_n u(0), \label{eq:galerODEsystem}
\end{align}
on the finite-dimensional space $H_n$.
The mapping $v \mapsto -Av + P_n f(v)$ is Lipschitz continuous from $H_n$ to $H_n$. By standard existence uniqueness results for ODEs, the system \eqref{eq:galerODEsystem} has a unique solution on some finite interval $[0,T]$ with $T$ dependent on $n$ and $u_0$.  We will see that the solution exists for all $T>0$.
        
        Consider the inner product of \eqref{eq:galerODEsystem} with $u_n$:
        \begin{align}
            \left(\frac{du_{n}}{dt},u_n \right) +  (A u_{n},u_n) &=  (P_n f(u_n),u_n).\label{eq:innerGar}
        \end{align} 
        Observe that $(P_n f(u_n),u_n)  = (f(u_n), P_n u_n) = (f(u_n),u_n)$. From the assumption $(u,f(u))\leq K$ and the coercivity of $a(\cdot,\cdot)$, we obtain the following:
        \begin{align*}
            \frac12 \frac{d \|u_n\|^2_H}{dt} +  b_{\min} \Vert u_n \Vert_V^2  \leq K. 
        \end{align*}
    Integrating both sides over $t$ between $0$ and $T$ gives 
        \begin{align*}
            \frac12 \|u_n(T)\|^2_H +  b_{\min}  \int_{0}^T \Vert u_n \Vert_V^2 dt  \leq K T + \frac1{2} \|u(0)\|^2_H.
        \end{align*}
We define $\gamma \coloneqq K T + \frac1{2} \|u(0)\|^2_H $.  Then, we obtain the following bounds:
\begin{align}
\sup_{t \in [0,T]} \|u_n(t)\|^2_H &   \leq 2 \gamma, \label{eq:uT} \\
\int_{0}^T \Vert u_n \Vert_V^2 dt  &\leq \frac{\gamma}{b_{\min}}  \label{eq:uL2}.
\end{align}
Observe that $\gamma$ is linear in $T$. Hence, Equation \eqref{eq:uT} with local existence of solutions for \eqref{eq:galerODEsystem} gives existence of solutions for any $T>0$. Observe that \eqref{eq:uT} \eqref{eq:uL2}
        give that $u_n$ is uniformly bounded in $L^\infty(0,T; H)$ and $L^2(0,T; V)$.
        
Since $\|f\|_H \leq K$ we obtain that $f(u_n)$ is uniformly bounded in $L^2(0,T; H)$ and $Au_n$ is uniformly bounded in $L^2(0,T; V^*)$. Hence, from \eqref{eq:galerODEsystem}, $du_n/dt$ is uniformly bounded in $L^2(0,T; V^*)$.
        By Aloaglu's compactness theorem we can extract a weakly convergent subsequence $u_n$, with $u_n \rightharpoonup u$ in $L^2(0,T;V)$ and $f(u_n) \rightharpoonup \chi $ in $L^2(0,T;H)$. The strong convergence $u_n \rightarrow u $ in $L^2(0,T;H)$ is obtained by using Lemma \ref{lem:8.3}.

Now we want to show that $P_n f(u_n) \rightharpoonup \chi$ in $L^2(0,T;H)$. We have that
\begin{align}
\int_{\Omega_T}(P_n f(u_n) - \chi) \phi dx dt &= \int_{\Omega_T} (f(u_n) - \chi) \phi dx dt  -  \int_{\Omega_T} Q_n f(u_n)\phi dx dt, \label{eq:Pnfweak}
\end{align}
for all $\phi \in L^2(0,T;H)$. Recall that $f(u_n) \rightharpoonup \chi $ in $L^2(0,T;H)$. So we just need to consider the $Q_n$ term in \eqref{eq:Pnfweak}. Observe that $\| Q_n f(u_n) \|_H  = \| f(u_n) \|_H  \leq K $.  We can consider $\phi = \sum_{j=1}^m \alpha_j(t) \phi_j$ where $\alpha_j \in L^2(0,T)$ and $ \phi_j \in C^\infty_c(\Omega))$ since $\phi$ dense in $ L^2(0,T;H)$.  From Lemma \ref{lem:7.5} we obtain that $Q_n \phi_j  \rightarrow 0$ in $H$ and we have shown that $P_n f(u_n) \rightharpoonup \chi$ in $L^2(0,T;H)$.

        Combining the results we arrive at the equality
        \begin{align*}
            \frac{du}{dt} + Au = \chi,
        \end{align*}
        holds in the dual space $L^2((0,T); V^*)$.
        
        Next, we show that $\chi = f(u)$.
        Since $u_n \rightarrow u$ in $L^2(0,T;H)$  there exists a subsequence $u_{n_j}$ such that $u_{n_j}(x,t)\rightarrow u(x,t)$ for almost every $(x,t) \in [0,T]\times\Omega$.
        Note that  $f(u_{n_j})(x,t) \rightarrow f(u)(x,t)$ for almost every $(x,t) \in [0,T]\times\Omega)$ and $f(u_{n_j})$ is uniformly bounded in $L^2(0,T;H)$. Therefore, by Lemma \ref{lem:8.3}, $f(u_{n_j}) \rightharpoonup f(u)$ in $L^2(0,T;H)$.
        Finally, by the uniqueness of the weak limit we obtain that $\chi = f(u)$.

        Now we have $u \in L^2(0,T;V)$ and $ {du}/{dt} \in L^2(0,T;V^*)$. By Theorem \ref{theo:7.2}, $u \in C^0( [0,T];H)$. 
        
        To show that $u_n(0)= u(0)$, let $\phi \in C^1([0,T];V)$ with $\phi(T)=0$. 
        Consider the limiting equation of the approximation,
        \[
            \left\langle \frac{du}{dt} , v \right\rangle + a(u,v) = \left\langle f(u), v \right\rangle \left(v \in V\right).
        \]
        Integrating from $0$ to $T$ and using integration by parts we get
        \begin{equation}
            \int^T_0 - \langle u, \phi' \rangle + a(u, \phi) dt = \int^T_0 \langle f(u(t)), \phi  \rangle dt + (u(0),\phi(0)). \label{lim_eq}
        \end{equation}
        On the other hand, from the Galerkin approximation, we have 
        \begin{equation}
            \int^T_0 - \langle u_n, \phi' \rangle + a(u_n, \phi) dt = \int^T_0 \langle P_n f(u_n(t)), \phi  \rangle dt  + (u_n(0),\phi(0)). \label{gal_app}
        \end{equation}
        Recall that $u_n(0) = P_n u_0 \rightarrow u_0 $.
        Then, taking the limit in \eqref{gal_app} and comparing with \eqref{lim_eq} we obtain $(u_0 - u(0), \phi(0)) =0 $ which implies $u_0 =u(0)$ as  $\phi(0)$ is arbitrary. 
        
        To show the uniqueness and continuous dependence of the solutions, take $u_0, v_0 \in H$ and consider the corresponding solutions $u,v$. 
We define $w \coloneqq u-v$. Then, $w$ satisfies
\[
\frac{dw}{dt} + A w = f(u)-f(v), \qquad w(0) = u_0-v_0.
\]
We take the inner product with $w$ to obtain $    \frac12 \frac{d\|w\|^2_H}{dt} + (A w,w) = (f(u)-f(v),u-v)$.
Because $(f(u)-f(v),u-v) \leq c{\|u-v\|_{H}^2}$ and $ (A w,w) \geq  b_{\min} \Vert w \Vert_V $, we have $\frac12 \frac{d\|w\|^2_H}{dt} \leq c\|w\|_H^2$. 
By integrating over $t$ we get $\|u(t)-v(t)\|_H \leq \|u_0-v_0\|_H e^{ct}$ which implies the uniqueness and continuous dependence on initial conditions.
\end{proof}

\subsection{Strong Solutions \label{sec:strong}}
In this section, we consider more regular solutions (we refer to such solutions as strong solutions).
It is considered by regarding \eqref{eq:govw} as an equality that holds in $L^2(0,T;H)$. 

    \begin{thm}[Well-Posedness of Strong Solutions] Equation \eqref{eq:govw} with $u(0)=u_0 \in V$ has a unique solution $u$ for any $T>0$ with
        \[
            u \in L^2(0,T;D(A)), \; \; \frac{du}{dt} \in L^2(0,T;H), 
        \] 
         and $u \in C^0([0,T];V)$ with the caveat that it may have to be adjusted on a set of measure zero.
        Furthermore, $u_0 \mapsto u(t)$ is in $C^0(V;V)$. \label{theo:mainstrong}
    \end{thm}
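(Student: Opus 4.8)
The plan is to run the \emph{same} Galerkin scheme $u_n=\sum_{j=1}^n u_{nj}(t)w_j$ used for Theorem~\ref{theo:mainweak} and to obtain sharper a priori estimates by testing \eqref{eq:galerODEsystem} against $Au_n$ instead of $u_n$. Because $f$ is bounded in $H$, this creates no term that cannot be absorbed, and only the abstract properties of $A$ and of the bilinear form $a(\cdot,\cdot)$ are used, so the discontinuity of $b$ never appears. Concretely, taking the $H$-inner product of \eqref{eq:galerODEsystem} with $Au_n$, using the symmetry of $A$ and the identity $\left(\frac{du_n}{dt},Au_n\right)=\frac12\frac{d}{dt}a(u_n,u_n)$, the bound $\|P_nf(u_n)\|_H\le\|f(u_n)\|_H\le K$ and Young's inequality, one arrives at
\[
\frac{d}{dt}\,a(u_n,u_n)+\|Au_n\|_H^2\le K^2 .
\]
Integrating in $t$ and using $a(P_nu_0,P_nu_0)\le a(u_0,u_0)\le b_{\max}\|u_0\|_V^2<\infty$ (this is the one place where $u_0\in V$ enters) together with coercivity of $a(\cdot,\cdot)$ and the Poincaré inequality, $u_n$ is bounded in $L^\infty(0,T;V)$ and in $L^2(0,T;D(A))$; since $\frac{du_n}{dt}=P_nf(u_n)-Au_n$ with $\|P_nf(u_n)\|_{L^2(0,T;H)}\le K\sqrt T$, also $\frac{du_n}{dt}$ is bounded in $L^2(0,T;H)$.

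Next I would pass to the limit: by weak and weak-$\ast$ compactness extract a subsequence with $u_n\rightharpoonup u$ in $L^2(0,T;D(A))$, weakly-$\ast$ in $L^\infty(0,T;V)$, and $\frac{du_n}{dt}\rightharpoonup \frac{du}{dt}$ in $L^2(0,T;H)$. This subsequence also converges weakly in $L^2(0,T;V)$, so by the uniqueness part of Theorem~\ref{theo:mainweak} its limit is exactly the weak solution with data $u_0$; hence that weak solution already satisfies $u\in L^2(0,T;D(A))$ and $\frac{du}{dt}\in L^2(0,T;H)$. Since then $Au\in L^2(0,T;H)$ and $\|f(u)\|_H\le K$, every term of \eqref{eq:govw} lies in $L^2(0,T;H)$, so the equation holds there, and uniqueness of strong solutions is inherited from Theorem~\ref{theo:mainweak}, because every strong solution is in particular a weak solution.

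For the time-continuity $u\in C^0([0,T];V)$ the plan is to apply Theorem~\ref{theo:7.2} to $A^{1/2}u$: since $A^{1/2}$ is an isomorphism $D(A)\to V$ and $H\to V^*$, one has $A^{1/2}u\in L^2(0,T;V)$ and, passing to the limit in $\frac{d}{dt}(A^{1/2}u_n)=A^{1/2}\frac{du_n}{dt}$, also $\frac{d}{dt}(A^{1/2}u)=A^{1/2}\frac{du}{dt}\in L^2(0,T;V^*)$; Theorem~\ref{theo:7.2} then yields $A^{1/2}u\in C^0([0,T];H)$, i.e.\ $u\in C^0([0,T];V)$ (equivalently this is the Lions--Magenes embedding for the Gelfand triple $D(A)\subset V\subset D(A)^*$, using $[D(A),H]_{1/2}=V$). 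In particular $u(0)\in V$ is well defined and equals $u_0$ by Theorem~\ref{theo:mainweak}. For continuous dependence in $V$, given strong solutions $u,v$ with data $u_0,v_0\in V$, set $w=u-v$, so that $\frac{dw}{dt}+Aw=f(u)-f(v)$ in $L^2(0,T;H)$ with $w(0)=u_0-v_0$; testing with $Aw$, using $\frac12\frac{d}{dt}a(w,w)=\left(\frac{dw}{dt},Aw\right)$, the Lipschitz estimate $\|f(u)-f(v)\|_H\le c\|u-v\|_H$, Young's inequality and the $H$-bound $\|w(t)\|_H\le\|w(0)\|_H e^{ct}$ already provided by Theorem~\ref{theo:mainweak}, one obtains $\frac{d}{dt}a(w,w)\le c\,e^{ct}\|w(0)\|_V^2$; integrating and using that $a(w,w)^{1/2}$ is equivalent to $\|w\|_V$ gives $\|u(t)-v(t)\|_V\le C(t)\|u_0-v_0\|_V$, hence $u_0\mapsto u(t)$ is continuous (indeed Lipschitz) from $V$ to $V$.

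The step I expect to be the main obstacle is the $C^0([0,T];V)$-regularity: one must make rigorous the ``shifted'' Lions--Magenes statement above (that $A^{1/2}u$ has the regularity required by Theorem~\ref{theo:7.2} and that $A^{1/2}$ commutes with $\frac{d}{dt}$ in the limit) and, similarly, justify the energy identities $\frac12\frac{d}{dt}a(u_n,u_n)=\left(\frac{du_n}{dt},Au_n\right)$ and $\frac12\frac{d}{dt}a(w,w)=\left(\frac{dw}{dt},Aw\right)$ used above. Once this is settled the estimates close precisely because $f$ is bounded, and, exactly as in the weak case, the discontinuity of the diffusion coefficient causes no trouble at the level of the Galerkin bounds; it only manifests in the fact that the natural regularity class is $D(A)$ (piecewise $H^2$ with the transmission condition \eqref{eq:contfluxp}) rather than $H^2(\Omega)$.
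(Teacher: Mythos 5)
Your proposal is correct and, for the core of the argument, coincides with the paper's proof: the same Galerkin scheme tested against $Au_n$, the identity $\left(\tfrac{du_n}{dt},Au_n\right)=\tfrac12\tfrac{d}{dt}a(u_n,u_n)$, absorption of $\|Au_n\|_H^2$ by Young's inequality using only $\|f\|_H\le K$, weak compactness and identification of the limit with the weak solution of Theorem~\ref{theo:mainweak}, and uniqueness/continuous dependence by testing the difference equation with $Aw$ (the paper closes this last estimate with Gronwall directly on $\|w\|_V^2$ via $\|w\|_H\le c\|w\|_V$, whereas you first feed in the $H$-contraction bound from the weak theorem; both work and give Lipschitz dependence in $V$). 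The one place where you genuinely diverge is the proof of $u\in C^0([0,T];V)$: the paper applies Theorem~\ref{theo:7.2} to $v=(\nabla u)_i$, asserting $v\in L^2(0,T;V)$ and $dv/dt\in L^2(0,T;V^*)$, while you apply it to $A^{1/2}u$, using $D(A^{1/2})=V$ and the isomorphisms $A^{1/2}\colon D(A)\to V$ and $A^{1/2}\colon H\to V^*$. Your route is the more robust of the two in this diffraction setting: the transmission condition \eqref{eq:contfluxp} with $b_1\neq b_2$ forces a jump of $\nabla u\cdot\nu$ across $\Gamma$, and $\nabla u$ has no reason to vanish on $S$, so the components of $\nabla u$ do not lie in $V=H_0^1(\Omega)$ and the paper's choice of test function needs repair, whereas the $A^{1/2}$/interpolation argument never sees the interface. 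The energy identities you flag as requiring justification ($\tfrac12\tfrac{d}{dt}a(w,w)=(\tfrac{dw}{dt},Aw)$ for $w\in L^2(0,T;D(A))$ with $w'\in L^2(0,T;H)$, and the commutation of $A^{1/2}$ with $d/dt$) are standard Lions--Magenes facts for the Gelfand triple $D(A)\subset V\subset H$ and pose no obstruction, so your plan closes.
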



\begin{proof}
    We follow a similar method as the proof of Theorem \ref{theo:mainweak}. Now, we consider taking the inner product of \eqref{eq:galerODEsystem} with $Au_n$, which gives  
    \begin{align}
        \left(\frac{d u_n}{dt} , A u_n \right) + \| A u_n \|_H^2   = (P_n f(u_n) , A u_n). \label{stronginner}
    \end{align}
    
    Now, let us assume that $b_1>b_2$. By integrating the first term on the LHS of \eqref{stronginner} over $t\in[0,T]$, 
    and using the fact that  $\left(\frac{d u_n}{dt} , A u_n \right)\ = a(u_n,\frac{d u_n}{dt} )$, we have
    \begin{equation} \label{eq:evaluate_storong}
    \frac{b_{2}}{2}\|u_n(T)\|_{V}^2-\frac{b_{1}}{2}\|u_n(0)\|^2_{V}\leq\int_{0}^{T}\left(\frac{du_n}{dt},Au_n\right)dt,
    \end{equation}
    On the other hand, by applying the Cauchy-Schwarz inequality and Young's inequality to the RHS of \eqref{stronginner} and combining with \eqref{eq:evaluate_storong}, we obtain the following:
    \begin{equation*}
        \begin{split}
        &\frac{b_{2}}{2}\|u_n(T)\|_{V}^2-\frac{b_{1}}{2}\|u_n(0)\|^2_{V}
        +\| u_n \|_{L^2(0,T;D(A))}^2 
        \leq \frac{1}{2}\|P_nf(u_n)\|_{L^2(0,T;H)}^2+
        \frac{1}{2}\| u_n \|_{L^2(0,T;D(A))}^2\\
        &\Rightarrow 
        b_{2}\|u_n(T)\|_{V}^2-b_{1}\|u_n(0)\|^2_{V}
        \leq \|P_nf(u_n)\|_{L^2(0,T;H)}^2
        \leq \|f(u_n)\|_{L^2(0,T;H)}^2
        \end{split}
    \end{equation*}  
    
By means of a similar argument as in the proof of Theorem \ref{theo:mainweak}, we can show that $u_n \rightarrow u $ in $L^2(0,T;D(A))$ and  $P_n f(u_n) \rightharpoonup f(u)$ in $L^2(0,T;H)$.

We have that $u \in L^2(0,T; D(A))$ and $\frac{du}{dt} \in L^2(0,T; H)$. So if we take $v = (\nabla u)_i$ then $v \in L^2(0,T; V)$ and $\frac{dv}{dt} \in L^2(0,T; V^*)$  which allows us to apply Theorem \ref{theo:7.2}
to obtain that $v \in C^0([0,T]; H)$. Consequently, we have that $u \in C^0([0,T]; V)$.

Next, we adapted the continuous uniqueness proof of Theorem \ref{theo:mainweak} for $V$.
Take $u_0, v_0 \in V$ and consider the corresponding solutions $u,v$. Define $w \coloneqq u-v$.
Then, $w$ satisfies
\[
\frac{dw}{dt} + A w = f(u)-f(v), \qquad w(0) = u_0-v_0.
\]

By taking the inner product with $Aw$ and using the fact that $f$ is Lipschitz continuous, we have
\begin{align*}
\left(\frac{dw}{dt},Aw\right) + \| Aw \|_H^2 \leq (f(u)-f(v),Aw) \leq \frac{1}{2} c^2\|w\|^2_V + \frac{1}{2}  \| Aw \|_H^2  . \label{eq:w2}
\end{align*}

We move the second term in the RHS to LHS, and drop the $\| Aw \|_H^2$ term. By integration over $[0,t]$, and applying the similar steps as in \eqref{eq:evaluate_storong} to the LHS, we get

\begin{equation*}
    \|w(t)\|_{V}^2\leq \frac{c^2}{b_{2}}\int_{0}^{t}\|w(s)\|^2_{V}ds+\frac{b_{1}}{b_{2}}\|w(0)\|_{V}^2.
\end{equation*}
From Gronwall's inequality, 
we obtain $\|u(t)-v(t)\|_{V}^2\leq (b_{1}/b_{2})\|u(0)-v(0)\|^2_{V}\exp(c^2t/b_{2})$.
Therefore, we can prove uniqueness and continuous dependence on initial conditions.
In the case of $b_2>b_1$, the result can be obtained by interchanging $b_2$ and $b_1$.

\end{proof}

\begin{rem}
Let us consider the classical approach to the Laplacian problem with smooth diffusion and regularize the jump in diffusivity by passing to a limit where the transition region between $b_1$ and $b_2$ shrinks to a lower-dimensional surface. 

We state a Laplacian problem with diffusion coefficient $b_{\varepsilon}\colon \bar\Omega \rightarrow \mathbb{R} ({\forall}\varepsilon>0)$, where $b_{\varepsilon}$ is sufficiently smooth.
   Let $u_{\varepsilon}$ be the solution of the following
    \begin{align}
        \frac{d u_{\varepsilon}}{dt}-\nabla\cdot(b_{\varepsilon}\nabla u_{\varepsilon})&=f &&\mbox{ in $\Omega\times[0,T]$}\\
        u_{\varepsilon}&=0 &&\mbox{ in $\partial\Omega\times[0,T]$}\\
        u_{\varepsilon}&=u_{\varepsilon}^{0} &&\mbox{ in $\bar{\Omega}$ at $t=0$}.
    \end{align}
with $f \in L^2(0,T; L^2(\Omega))$ then $u_{\varepsilon}\in L^2(0,T;H^2(\Omega))\cap L^{\infty}(0,T;H^1_0(\Omega))$ for all $\varepsilon>0$, we refer to \cite{evans} for the full statement. We note that the statement can be extended to nonlinear $f$ using the techniques in the proof of Theorem \ref{theo:mainweak}.


When \( b_\varepsilon \to b \) as \( \varepsilon \to 0 \), the regularized solutions \( u_\varepsilon \) are uniformly bounded in \( L^2(0, T; V) \). By the compactness theorem we can extract a subsequence that converges weakly in \( L^2(0, T; V) \). Hence, by passing to the limit \( \varepsilon \to 0 \), the weak limit \( u \) satisfies the weak formulation of the PDE with the discontinuous diffusion coefficient \( b \). Thus, we can obtain \( u \in L^2(0, T; V) \). 
However, the strong solution in \(D(A) \) is not guaranteed due to the potential loss of regularity introduced by the discontinuity in \( b \). This is because the higher regularity (i.e., second order derivatives) may not be controlled uniformly as \( \varepsilon \to 0 \), particularly at the interface where the jump occurs.



In our paper, we tackled the discontinuous diffusion problem directly, involving proving the existence and uniqueness of weak solutions using the Galerkin method and then leveraging elliptic regularity results to demonstrate that these weak solutions are actually strong solutions.
\end{rem}

\section{Conclusion \label{sec:conc}}
  
In this paper, we established the global existence and uniqueness of strong solutions for reaction-diffusion equations with diffraction Laplacian and nonlinear terms describing general oxygen consumption.  These results extend previous work \cite{dejong2023reactiondiffusion} which relied on monotonicity properties of the nonlinear term. This work can be used to make results in \cite{diez2024turing} rigorous as well as provide the theoretical foundation for future numerical work on the dynamics of critical cell states.

\section{Acknowledgement}
The authors would like to thank Georg Prokert and Hirofumi Notsu for their assistance.

\end{document}